\newtheorem{theo}{Theorem}
\newtheorem{lemma}{Lemma}
\newcommand{\R}{\mathbb{R}}	
\newcommand{\N}{\mathbb{N}}	
\newcommand{\eps}{\varepsilon}	
\newcommand{\pa}{\partial}		
\newcommand{\Div}{\textrm{div}\,}	
\newcommand{\Curl}{\textrm{curl}\,}	
\newcommand{\na}{\nabla}		
\newcommand{\OmT}{			
\Omega\times (0,T) }	
\newcommand{\uml}{\"}
\newcommand{\IRd}{\int_{\R^2}}
\newcommand{\black}[1]{\textcolor{black}{#1}}
\newcommand{\utau}{u^{(\tau)}}
\newcommand{\ptau}{p^{(\tau)}}
\newcommand{\urho}{u^{(\rho)}}
\newcommand{\prho}{p^{(\rho)}}
\newcommand{\QT}{\R^2\times (0,T)}
\newcommand{\II}[1]{\int_0^T\int_{\R^2} #1\, dx dt}
\newcommand{\urhop}{u^{(\rho')}}
\title[Existence of weak solutions to a continuity equation with space time nonlocal  Darcy law ]{{{Existence of weak solutions to a continuity equation with space time nonlocal Darcy law }}}
\author{Luis Caffarelli, Maria Gualdani, Nicola Zamponi}
\address{The University of Texas at Austin
Mathematics Department RLM 8.100
2515 Speedway Stop C1200
Austin, Texas 78712-1202}
\email{caffarel@math.utexas.edu}
\address{The University of Texas at Austin
Mathematics Department RLM 8.100
2515 Speedway Stop C1200
Austin, Texas 78712-1202}
\email{gualdani@math.utexas.edu }
\address{University of Mannheim, School of Business Informatics and Mathematics, B6, 28, 68159 Mannheim (Germany)}
\email{nzamponi@mail.uni-mannheim.de}
\date{\today}
\begin{document}

\thanks{ LC is supported by NSF DMS-1540162. MPG is supported by DMS-1514761 and would like to
thank NCTS Mathematics Division Taipei for their kind hospitality. NZ acknowledges partial support from the Austrian Science Fund (FWF), grants P22108, P24304, W1245 and from the Czech Science Foundation, project No. 19-04243S, as well as support from the Alexander von Humboldt foundation.} 

\begin{abstract}
In this manuscript we consider a non-local porous medium equation with non-local diffusion effects given by a fractional heat operator
\begin{equation*}
    \begin{cases}
      \pa_t u = \Div(u\na p),\\
      \pa_t p = -(-\Delta)^s p + u^\beta,
    \end{cases}
\end{equation*}
 in two space dimensions for $\beta>1$, $\frac{1}{\beta}<s<1$. Global in time existence of weak solutions is shown by employing a time semi-discretization of the equations, an energy inequality and the Div-Curl lemma. 
\end{abstract}
%

\maketitle


\section{Introduction}

\black
In this manuscript we study existence of weak solutions to a porous medium equation with non-local diffusion effects: 
\begin{equation}\label{1L}
    \begin{cases}
      \pa_t u = \Div(u\na p),\\
      \pa_t p = -(-\Delta)^s p + u^\beta.
    \end{cases}
\end{equation}
Here $u(x,t)\ge 0$ denotes the density function and $p(x,t)\ge 0$ the pressure. We analyze the problem when $x\in \R^2$, $ \frac{1}{\beta} < s < 1$ and $\beta>1$. The model describes the time evolution of a density function $u$ that evolves under the continuity equation 
$$
\pa_t u = \Div(u {\bf{v}}),
$$
where the velocity is conservative, ${\bf{v}}=\nabla p$, and $p$ is related to $u^\beta$ by the inverse of the fractional heat operator $\pa_t + (-\Delta )^s$.

Problem (\ref{1L}) is the parabolic-parabolic version of a parabolic-elliptic problem recently studied in \cite{BIK15}. In \cite{BIK15}, the authors proved the existence of sign-changing weak solutions to 
\begin{align}\label{BIKeq} 
\pa_t u = \Div(|u|\nabla ^{\alpha-1}(|u|^{m-2}u)).
\end{align}
For $m=q+1$ and $\alpha = 2-2s$ equation (\ref{BIKeq}) reads as 
\begin{align*}
 \pa_t u & = \Div(u\na p), \quad p = (-\Delta )^{-s} u^\beta ,\quad 0<s\le 1.
\end{align*}
The presence of $\pa_t p$ makes our system quite different from  (\ref{BIKeq}). For example, techniques such as maximum principle and Stroock-Varopoulos inequality do not work. We overcome these significant shortcomings with the introduction of ad-hoc regularization terms, together with suitable compact embeddings and moment estimates. See later for a more detailed explanation.

A {\em{linear}} parabolic-elliptic version of (\ref{1L})
\begin{align}
 \pa_t u & = \Div(u\na p), \quad p = (-\Delta )^{-s} u ,\quad 0<s\le 1, \label{CV} 
\end{align}
was studied by the first author and collaborators in a series of papers: existence of weak solutions for (\ref{CV}) is proven in \cite{CV11, CV15, SV14} and H\"older regularity in  \cite{CSV13}. The case $s=1$ also appeared in \cite{AS08} as a model for superconductivity.     

Systems (\ref{CV}) and (\ref{1L}) are reminiscent to a well-studied macroscopic model proposed for phase segregation in particle systems with long range interaction:   
\begin{equation}\label{1I}
    \begin{cases} \pa_t u  =  \Delta u + \Div(\sigma(u)\na p),\\
 p  = K \ast u.
  \end{cases}
\end{equation}
Any system that exhibits coexistence of different densities (for example, fluid and vapor or fluid and solid) has equilibrium configurations that segregate into different regions; the surface of these regions are minimizers of a {free energy functional}. The relaxation to equilibrium of the density function $u(x,t)$ can be described in general by nonlinear integro-differential equations of type (\ref{1I}).  One example is the model proposed in \cite{GL97}, in which the mobility is $\sigma(u) := u(1-u)$ and the kernel $K$ is bounded, symmetric and compactly supported. Such model describs the hydrodynamic (or mean-field) limit of a microscopic model undergoing phase segregation with particles interacting under a short-range and long-range Kac potential.  Several other variants of (\ref{1I}) are present in the literature \cite{R91, GL97, GLM00, GL98, STV18}. We also mention \cite{LMG01} for the study of a deterministic particle method for heat and Fokker–Planck equations of porous media type where the non-locality appears in the coefficients. The long time behavior of weak solutions to \eqref{1L} was studied in \cite{DGZ19}. There the authors show algebraic decay in time towards the stationary solutions $u=0$ and $\nabla p =0$. \\

The condition that the pressure satisfies a parabolic equation introduces non-trivial complications in the analysis of (\ref{1L}). The non-local structure prevents the equation from having  a comparison principle. Moreover, maximum principle does not give useful insights, since at any point of maximum for $u$ we only know that $\pa_t u \le u \Delta p$.  We overcome the lack of comparison and maximum principles with the introduction of several regularizations. Stampacchia's truncation arguments yield non-negativity of the solutions and the Div-Curl lemma will be used to identify the limit for $u^\beta$. 

The main result of this manuscript is summarized in the following theorem: 
\begin{theo}\label{main_thm} 
Let $\beta>1$, $\frac{1}{\beta}<s<1$.
Moreover let $u_{in}, p_{in} : \R^2 \to [0,+\infty)$ be functions such that 
$u_{in}\in L^1\cap L^\beta(\R^2)$, $p_{in}\in L^1\cap H^1(\R^2)$. 
There exist functions $u, p: \R^2 \times [0,{\infty)} \to [0,+\infty) $ such that for every $T>0$ 
\begin{align*}
& u\in L^\infty(0,T,L^1\cap L^\beta(\R^2))\cap
L^{\beta+1}(\QT), \\ 
& p\in L^\infty(0,T,H^{1}\cap L^1(\R^2))\cap L^2(0,T,H^{s+1}(\R^2)), \\
& \pa_t u  \in L^{\beta+1}(0,T, W^{-1,\frac{2(\beta+1)}{\beta+3}}(\R^2)), \quad 
\pa_t p\in L^{(\beta+1)/\beta}(0,T,(L^2\cap L^{\beta+1}(\R^2))'),
\end{align*}
which satisfy the following weak formulation to (\ref{1L}): 
\begin{align}\label{weak.u}
& \int_0^T  \langle \pa_t u, \phi\rangle dt 
+\int_0^T\int_{\R^{2}}u\nabla p\cdot\nabla\phi\, dx dt = 0\quad \forall \phi \in L^{\frac{\beta+1}{\beta}}(0,T; W^{1,\frac{2(\beta+1)}{\beta-1}}(\R^2)), \\ 
\label{weak.p}
&\int_0^T  \langle \pa_t p, \psi\rangle  dt  + \int_0^T\int_{\R^2}( (-\Delta)^{s} p - {u^\beta}) \psi dx dt= 0
\quad\forall \psi \in {L^{\beta+1}(0,T; L^2\cap L^{\beta+1}(\R^2))},\\
\nonumber
&\lim_{t\to 0} u(t) = u_{in}\quad\textrm{in} \; W^{-1,\frac{2(\beta+1)}{\beta+3}}(\R^2),\quad  \lim_{t\to 0} p(t) = p_{in} \quad\textrm{in}\; (L^2\cap L^{\beta+1}(\R^2))'  ,
\end{align}
as well as the mass conservation relation
$$
\int_{\R^2}u(x,t)dx = \int_{\R^2}u_{in}(x)dx , \quad t>0.
$$
\end{theo}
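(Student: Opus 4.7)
The proof will follow the blueprint suggested in the abstract: a time semi-discretization combined with a regularization, uniform a priori bounds coming from an energy identity, and the Div--Curl lemma to identify the nonlinear source term in the limit. I would fix a time step $\tau>0$ and an auxiliary small parameter $\eps>0$ and, at the $n$-th step, given $(u^{n-1},p^{n-1})$, solve the elliptic system
\begin{equation*}
\frac{u^n-u^{n-1}}{\tau}-\eps(\Delta u^n-u^n)=\Div(u^n\na p^n),\qquad \frac{p^n-p^{n-1}}{\tau}+(-\Delta)^s p^n=(u^n)^\beta,
\end{equation*}
on $\R^2$. The $\eps$--terms give an $H^1$--coercive bound on $u^n$ that enables existence at each time step via a Leray--Schauder fixed point in $H^1(\R^2)\times H^{s+1}(\R^2)$, and nonnegativity of $u^n$ is then enforced by a Stampacchia truncation; this detour is necessary because, as emphasized in the introduction, no comparison principle is available for the parabolic--parabolic system.

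The core a priori estimate is the energy identity. Formally, testing the density equation by $u^{\beta-1}/(\beta-1)$ and the pressure equation by $-\Delta p$, the cross term $\int_{\R^2}\na u^\beta\cdot\na p\,dx$ that appears in both cancels and produces
\begin{equation*}
\frac{d}{dt}\left[\frac{1}{\beta-1}\int_{\R^2}u^\beta\,dx+\frac12\|\na p\|_{L^2(\R^2)}^2\right]+\|(-\Delta)^{(s+1)/2}p\|_{L^2(\R^2)}^2=0,
\end{equation*}
which at the discrete level becomes an inequality via convexity of $u\mapsto u^\beta/[\beta(\beta-1)]$ and is uniform in $\tau$ and $\eps$. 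Combined with mass conservation (obtained by testing the first equation against $1$), this yields $u\in L^\infty(0,T;L^1\cap L^\beta(\R^2))$ and $p\in L^\infty(0,T;H^1)\cap L^2(0,T;H^{s+1})$. The space-time bound $u\in L^{\beta+1}(\QT)$ is obtained by testing the pressure equation against $u$ and integrating by parts in time; the resulting term $\int u|\na p|^2$ is controlled precisely thanks to the condition $s>1/\beta$ and the two-dimensional Sobolev embedding $H^s(\R^2)\hookrightarrow L^{2\beta/(\beta-1)}(\R^2)$. The estimates on $\pa_t u$ and $\pa_t p$ in the dual spaces stated in the theorem then follow by duality from the weak formulations.

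The last step is the passage to the limit $\eps\to 0$ and $\tau\to 0$. The $L^2(H^{s+1})$ bound on $p$ together with the $\pa_t p$ estimate give strong $L^2$ compactness of $\na p^{(\tau)}$ via Aubin--Lions, which handles the flux term $u\na p$. The delicate point, and the main obstacle of the proof, is to identify the weak limit of $(\utau)^\beta$ in the source of the pressure equation: the absence of a maximum principle and of a Stroock--Varopoulos inequality rules out the usual monotonicity arguments, and the $\eps$--regularization does not yield a uniform spatial derivative bound on $\utau$. To overcome this I would appeal to the Div--Curl lemma in the space-time variables, applied to the vector field $(\utau,-\utau\na\ptau)$, whose space--time divergence vanishes modulo controlled $\tau$--errors by the first equation, paired with a suitable curl-free vector field constructed from the pressure. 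Once the limit of the source is identified as $u^\beta$, passage to the limit in (\ref{weak.u})--(\ref{weak.p}), attainment of the initial data from the $\pa_t$--bounds, and preservation of mass conservation are standard.
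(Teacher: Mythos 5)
Your high-level blueprint matches the paper's: time semi-discretization, energy dissipation through the Lyapunov functional $\int\bigl(u^\beta/(\beta-1)+|\nabla p|^2/2\bigr)dx$, mass conservation by testing against cutoffs, and a Div--Curl argument to identify the limit of the source $u^\beta$. However, there are three substantive gaps in the way you fill in that blueprint.

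First, the regularization $-\eps(\Delta u^n-u^n)$ on the discrete $u$-equation is not compatible with either the energy structure or the whole space $\R^2$. The natural energy test function for the density equation is $u^{\beta-1}$, and pairing it with $\eps\Delta u^n$ only yields a bound on $\eps\|\nabla u^{\beta/2}\|_{L^2}^2$, which provides no $\eps$-uniform control of $\nabla u$ and in particular does not make the residual $\eps(\Delta u^n-u^n)$ vanish strongly in a negative Sobolev space as $\eps\to0$. Worse, on $\R^2$ a bound in $H^1$ or any $W^{1,p}$ alone gives no $L^q$-compactness because mass can escape to infinity. The paper circumvents this by adding a \emph{weighted} confinement term $\eps\int u^{(\beta-1)/\beta}\phi\,\gamma\,dx$ with $\gamma(x)=\sqrt{1+|x|^2}$, which produces an integrability-with-weight estimate and then calls Lemma~\ref{lem_comp} for a compact embedding; your scheme has no analogue of this weighted bound.

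Second, you remove all regularization before passing to the limit in the nonlinearity, while the paper's order of limits is designed so that some regularization is always left to supply compactness. The paper works with four parameters $(\varrho_1,\varrho_2,\tau,\eps)$ and sends them to zero in the order $\eps\to0$, $\tau\to0$, $\varrho_2\to0$, $\varrho_1\to0$. The nonlinear regularization $\varrho_1|\nabla u^{\beta-1}|^{1/\beta-1}\nabla u^{\beta-1}$ survives through the first three limits and is what gives the $W^{1,(\beta+1)/\beta}$ bound on $(u^{(\tau)})^{\beta-1}$ needed for Aubin--Lions in the $\tau\to0$ step, and it is also what makes $\Div_{(t,x)}U^{(\rho)}=\Div_x\bigl(\varrho_1|\nabla u^{\beta-1}|^{1/\beta-1}\nabla u^{\beta-1}\bigr)\to0$ strongly in $W^{-1,(\beta+1)/\beta}$, the precondition of the Div--Curl lemma of \cite{GasMar08}. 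After your $\eps\to0$ you have nothing controlling $\nabla u^{(\tau)}$ at all, so neither the local $L^2$ compactness of $u^{(\tau)}$ nor the required decay of the divergence is available.

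Third, even with the correct regularization in place, Div--Curl alone does \emph{not} identify the weak limit $v$ of $(u^{(\rho)})^\beta$ with $u^\beta$. What Div--Curl gives, after using the two PDEs to rewrite the products and the strong (local) convergence of $p^{(\rho)}$, $\nabla p^{(\rho)}$, $(-\Delta)^s p^{(\rho)}$, is only the weak-$*$ measure convergence $(u^{(\rho)})^{\beta+1}\rightharpoonup u\,v$. To conclude $v=u^\beta$, i.e.\ to get a.e.\ convergence of $u^{(\rho)}$, the paper runs a separate truncation argument: it introduces $T_k(x)=\min\{x,k\}$, shows via \cite[Thr.~10.19]{FeiNov} that $\overline{u\,T_k(u)^\beta}\geq u\,\overline{T_k(u)^\beta}$, exploits the elementary monotonicity inequality $(x-y)(x^\beta-y^\beta)\geq(x-y)^2\max(x,y)^{\beta-1}$ for $x,y\geq0$, and controls the sets $\{u^{(\rho)}>k\}$ and $\{u^{(\rho)}\leq\epsilon\}$ with the uniform $L^{\beta+1}$ bound to show $u^{(\rho)}$ is Cauchy in $L^2_{loc}(\R^2\times(0,T))$. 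Your sketch jumps from ``apply Div--Curl'' directly to ``the limit of the source is $u^\beta$,'' skipping exactly this monotonicity/truncation step, which is where most of the technical work of the proof is concentrated.
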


The starting point about our analysis is the observation that 
$$ H[u,p] := \IRd\left( \frac{u^\beta}{\beta-1} + \frac{1}{2}|\na p|^2 \right)dx  $$
is a Lyapunov functional for (\ref{1L}) and satisfies the bound
$$
H[u,p] + \int_0^T \IRd |(-\Delta)^{s/2}\na p |^2 dx dt= H[u_{in},p_{in}]. 
$$
Indeed, formal computations show that 
\begin{align*}
\frac{d}{dt}\IRd \frac{u^\beta}{\beta-1} dx &= \left\langle \Div(u\na p) , 
\frac{\beta u^{\beta-1}}{\beta-1}\right\rangle = -\IRd \na u^{\beta}\cdot\na p dx .
\end{align*}
Testing the equation for $p$ against $\Delta p$ we obtain
\begin{align*}
\IRd \na u^{\beta}\cdot\na p dx &
= \frac{d}{dt}\IRd\frac{|\na p|^2}{2}dx + \IRd |(-\Delta)^{s/2}\na p |^2 dx ,
\end{align*}
which leads to 
\begin{equation}\label{H_ineq}
\frac{d}{dt}H[u,p] + \IRd |(-\Delta)^{s/2}\na p |^2 dx = 0\qquad t>0.
\end{equation}


The major difficulty, in the approximation process, is the identification of the limit of $u^\beta$. The energy inequality (\ref{H_ineq}) provides plenty of informations for the pressure $p$, but only uniform integrability in $L^\infty(L^\beta)$ for $u$. At the moment it is unclear to the authors how to use the bounds for $\nabla p$ to get useful bounds for $\nabla u$ or $u$. To overcome the lack of compactness we employ the Div-Curl Lemma (see \cite{GasMar08}) to the vector fields 
$$
U^\eps \equiv (u^\eps , -u^\eps\nabla p^\eps),\qquad
V^\eps \equiv (\pa_t p^\eps , \na p^\eps) ,
$$
where $(u^\eps,p^\eps)$ is a suitable approximate solution to \eqref{1L}. The argument yields 
$$
U^\eps\cdot V^\eps\rightharpoonup U\cdot V \qquad\mbox{weakly in }L^1(\R^2\times (0,T)),
$$
where $U$, $V$ are the weak limits of $U^\eps$, $V^\eps$, respectively. Strong convergence of $p^\eps$ and standard result in compensated compactness theory \cite{FeiNov} yield strong convergence for $u^\eps$.

{{The application of the Div-Curl Lemma brings two restrictions on the system. The first one concerns the lower bound for $s$, $s>\frac{1}{\beta}$, the second one the dimension. It is unclear how to remove such restrictions, as they seem necessary to fulfill the integrability and compactness constraints on the quantities $U^\eps$, $V^\eps$. The assumptions on $s$, $\beta$ and $d$ are not satisfactory from the point of view of a general theory for weak solutions. As such, Theorem 1 is a first step to understand the complete behavior of (1). Most interesting however, is the fact that the addition of a nonstationary term in the pressure equation radically changes the behavior of the system and calls for a different analytical setting than in \cite{CV11, STV18}. We also point out that the successful use of the Div-Curl Lemma, a tool commonly employed in the study of fluid-dynamic systems, in the analysis of nonlocal diffusion equations is (to our best knowledge) a novelty and an unexpected connection between the two fields.
Uniqueness of weak solutions is an important open question for our system. We expect it to hold for short time straightforwardly. For long time the only available result so far is the one in \cite{DGZ19}, in which the authors show a weak-strong uniqueness result: if there exists a strong solution, then any weak solution with the same initial data coincides with it.\\
Existence of a solution for $\beta =1$ appears to be out of reach with the present technique, as several other terms will lack compactness.
  }}

The paper is organized as follows: in Section \ref{tech_res}, we show two preliminary technical lemmas, and in Section \ref{main_sec} the proof of the main theorem. 



\section{Some technical results} \label{tech_res}
\begin{lemma}\label{lem_comp}
Let $g : [0,\infty)\to [0,\infty)$ be a continuous, nondecreasing function such that $\lim_{r\to\infty}g(r)=\infty$.
For $\kappa\in (0,p]$, $1\leq p < 2$ define the functional space $V_{g,\kappa,p}$ as 
$$
V_{g,\kappa,p} := W^{1,p}(\R^2)\cap L^\kappa(\R^2,g(|x|)dx) = 
\left\{ f\in W^{1,p}(\R^2) \; : \; \int_{\R^2} f(x)^\kappa g(|x|)dx < \infty\right\} .
$$ 
Then $V_{g,\kappa,p}$ is compactly embedded in $L^{q} (\R^2)$ for any $\max\{\kappa,1\}\leq q < \frac{2p}{2-p} $.
\end{lemma}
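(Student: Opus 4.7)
The plan is the familiar two-step compactness argument: obtain local $L^q$-compactness from the Sobolev part of the space and uniform tail decay from the weighted integrability, then patch them together.

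First, starting from a bounded sequence $(f_n) \subset V_{g,\kappa,p}$, I would apply Rellich--Kondrachov on each ball $B_R \subset \R^2$: since $1\leq p<2$, the embedding $W^{1,p}(B_R)\hookrightarrow L^q(B_R)$ is compact for every $1\leq q < p^* := \frac{2p}{2-p}$. A diagonal extraction along $R=1,2,\dots$ yields a subsequence (still denoted $f_n$) that converges in $L^q_{\mathrm{loc}}(\R^2)$ to some limit $f$.

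Second, I would quantify the uniform decay of tails. Since $g$ is nondecreasing and tends to $\infty$,
\[
\int_{|x|>R} |f_n|^\kappa\,dx \;\leq\; \frac{1}{g(R)}\int_{\R^2}|f_n|^\kappa g(|x|)\,dx \;\leq\; \frac{C}{g(R)} \;\to\; 0
\]
uniformly in $n$. When $q=\kappa$ (which forces $\kappa\geq 1$), this is already the desired tail control. When $\kappa<q<p^*$, I would use the Sobolev embedding $W^{1,p}(\R^2)\hookrightarrow L^{p^*}(\R^2)$ (again valid because $1\leq p<2$) to get a uniform bound $\|f_n\|_{L^{p^*}(\R^2)}\leq C$, and then interpolate via H\"older: writing $|f|^q = |f|^a\cdot|f|^b$ with $a+b=q$, $a/\kappa+b/p^*=1$, $a,b>0$, one obtains
\[
\int_{|x|>R}|f_n|^q\,dx \;\leq\; \left(\int_{|x|>R}|f_n|^\kappa\,dx\right)^{a/\kappa}\left(\int_{|x|>R}|f_n|^{p^*}\,dx\right)^{b/p^*} \;\leq\; \frac{C'}{g(R)^{a/\kappa}},
\]
which goes to $0$ uniformly in $n$ as $R\to\infty$.

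Third, combining local strong convergence on $B_R$ with uniform tail smallness yields $f_n\to f$ in $L^q(\R^2)$ by the standard $\varepsilon/3$ argument: given $\varepsilon>0$, choose $R$ so that the tails are smaller than $\varepsilon$ for every $n$, then $n$ large enough that $\|f_n-f_m\|_{L^q(B_R)}<\varepsilon$. The main technical point, and the reason the constraint $q<p^*$ appears, is the interpolation in the tail estimate: one needs a second, \emph{global} Lebesgue bound on $f_n$ strictly better than the weighted $L^\kappa$ control to upgrade $L^\kappa$ tail decay into $L^q$ tail decay, and Sobolev embedding is exactly what provides this up to (but not including) the critical exponent $p^*$.
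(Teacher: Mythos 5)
Your proof is correct and follows essentially the same route as the paper: local compactness from Rellich--Kondrachov plus a diagonal argument, tail control from the weighted $L^\kappa$ bound, and interpolation against the global Sobolev $L^{p^*}$ bound to reach the exponent $q$. The only organizational difference is that you perform the H\"older interpolation inside the tail estimate, whereas the paper first establishes $\int|f_n-f|^\kappa\to 0$ globally and then interpolates with the $L^{2p/(2-p)}$ bound at the end; the ingredients and the resulting constraints $\max\{\kappa,1\}\le q<\frac{2p}{2-p}$ are identical.
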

\begin{proof}
Let $\{f_n\}$ be a uniformly bounded sequence in $V_{g,\kappa,p}$. We first notice that there exists a subsequence, still denoted with $f_n$ such that 
$$
f_n \rightharpoonup f \; \textrm{weakly in } \; W^{1,p}(\R^2) \hookrightarrow L^{2p/(2-p)}(\R^2).
$$
Denote with $B_R$ the ball of center $x=0$ and radius $R$. Since $W^{1,p}(B_R)$ is compactly embedded in $L^q (B_R)$ for any $1\le q < \frac{2p}{2-p}$, there exists a subsequence of $f_n$, still denoted with $f_n$, such that $$
f_n \to f\; \textrm{strongly in }  L^q (B_R)\quad \textrm{for any} \;1\le q < \frac{2p}{2-p.}$$

 Thanks to a Cantor diagonal argument, the subsequence $f_n$ can be chosen to be independent of $R$. The uniform bound for $f_n$ in $V_{g,\kappa,p}$ and Fatou's Lemma imply that $f\in V_{g,\kappa,p}$.


Next we show that $|f_n - f|^\kappa$ strongly in $L^1(\R^2)$: for $n$ big enough 
\begin{align*}
\int_{\R^2} | f_n-f|^\kappa\;dx =& \int_{B_R} | f_n-f|^\kappa\;dx + \int_{B_R^c} | f_n-f|^\kappa\;dx\\
 \le & \;\frac{\varepsilon}{2} + \frac{1}{g(R)}\int_{B_R^c} g(|x|) | f_n-f|^\kappa\;dx \le \varepsilon,
\end{align*}
by choosing $R$ big enough. Interpolation between $L^{2p/(2-p)}$ and $L^{\max\{\kappa,1\}}$ implies that for any $q$ with $\max\{\kappa,1\}\leq q < \frac{2p}{2-p} $ the sequence $f_n$ strongly converges to $f$ in $L^q(\R^2)$.

\end{proof}

\begin{lemma}\label{frac_lapl_eta_R}
Define $\eta(x) = (1+|x|^2)^{-\alpha/2}$ with $\alpha > 4$ and for every $R\geq 1$ we set $\eta_R(x) = \eta(x/R)$. For $s>0$ we have 
$$
\lim_{R\to\infty} \|(-\Delta)^s \eta_R\|_{L^\infty} = 0.
$$
\end{lemma}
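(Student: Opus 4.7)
The plan is to exploit the homogeneity of the fractional Laplacian under dilations, reducing everything to a single statement about $\eta$ itself. Recall that if $g(x)=f(x/R)$, then on the Fourier side $\widehat{g}(\xi)=R^2\widehat{f}(R\xi)$, and a straightforward change of variables in
\[
(-\Delta)^s g(x)=\frac{1}{(2\pi)^2}\int_{\R^2}|\xi|^{2s}\widehat{g}(\xi)e^{ix\cdot\xi}\,d\xi
\]
gives the scaling identity
\[
(-\Delta)^s\eta_R(x)=R^{-2s}\bigl((-\Delta)^s\eta\bigr)(x/R).
\]
Taking suprema yields $\|(-\Delta)^s\eta_R\|_{L^\infty(\R^2)}=R^{-2s}\|(-\Delta)^s\eta\|_{L^\infty(\R^2)}$, and since $s>0$ the factor $R^{-2s}$ tends to $0$. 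Hence the whole lemma reduces to showing that $(-\Delta)^s\eta$ is bounded on $\R^2$.

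To establish this, I would use the pointwise singular-integral representation
\[
(-\Delta)^s\eta(x)=\frac{c_{2,s}}{2}\int_{\R^2}\frac{2\eta(x)-\eta(x+y)-\eta(x-y)}{|y|^{2+2s}}\,dy
\]
and split the integral at $|y|=1$. On the near region $|y|\leq 1$, Taylor's theorem together with the obvious fact that $\eta(x)=(1+|x|^2)^{-\alpha/2}$ is smooth with uniformly bounded second derivatives (independent of $x$) gives
\[
\bigl|2\eta(x)-\eta(x+y)-\eta(x-y)\bigr|\leq \|D^2\eta\|_{L^\infty}|y|^2,
\]
so the contribution is controlled by $\|D^2\eta\|_{L^\infty}\int_{|y|\leq 1}|y|^{-2s}\,dy$, which is finite because $s<1$. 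On the far region $|y|>1$, one simply bounds the numerator by $4\|\eta\|_{L^\infty}$ and uses $\int_{|y|>1}|y|^{-2-2s}\,dy<\infty$ since $s>0$. Both bounds are independent of $x$, so $(-\Delta)^s\eta\in L^\infty(\R^2)$.

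I do not expect serious obstacles here: the argument is essentially dimensional analysis plus a standard near/far splitting of the singular integral. The only subtlety is making sure that the near-part estimate uses the uniform boundedness of the Hessian of $\eta$ (which is automatic from the explicit form $\eta(x)=(1+|x|^2)^{-\alpha/2}$; the hypothesis $\alpha>4$ is more than enough, indeed only $\alpha\geq 0$ is needed for this step). The combination of the scaling identity with the $L^\infty$-bound on $(-\Delta)^s\eta$ then immediately yields $\|(-\Delta)^s\eta_R\|_{L^\infty}\to 0$ as $R\to\infty$.
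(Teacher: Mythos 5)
Your argument is correct, and it is genuinely different from (and cleaner than) the paper's. The paper works directly with $\eta_R$ and splits the singular integral into \emph{three} regions $\{|y|\leq 1\}$, $\{1<|y|\leq R\}$, $\{|y|>R\}$, bounding each piece by an explicit $R$-dependent quantity (using $\|D^2\eta_R\|_\infty \lesssim R^{-1}$, $\|\nabla\eta_R\|_\infty\lesssim R^{-1}$, $\|\eta_R\|_\infty\leq 1$) and then checking that each goes to zero, with the middle region requiring a case analysis on $s\lessgtr 1/2$. You instead observe the exact scaling identity $(-\Delta)^s\eta_R(x)=R^{-2s}((-\Delta)^s\eta)(x/R)$ — which also follows without Fourier analysis by substituting $y=Rz$ in the principal-value integral — and reduce the whole lemma to the single $R$-independent fact that $(-\Delta)^s\eta\in L^\infty(\R^2)$, established by the standard two-region near/far splitting. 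This gives the clean rate $\|(-\Delta)^s\eta_R\|_{L^\infty}=C R^{-2s}$ for all $s\in(0,1)$ and avoids the $g_R$ bookkeeping entirely. (Minor point: for $s>1/2$ the paper's $O(R^{-1})$ rate is slightly sharper than your $O(R^{-2s})$, but both suffice. Your remark that $\alpha>4$ is not needed here is also accurate; that hypothesis is imposed for other uses of $\eta_R$ in the paper.)
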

\begin{proof}
{{The result is a consequence of the scaling property of the fractional laplacian:
$$
(-\Delta)^s \eta_R = \frac{1}{R^{2s}}(-\Delta)^s \eta.
$$
}}
\end{proof}

\section{Proof of the main theorem} \label{main_sec}

Define the spaces 
$$ X := L^{\frac{2\beta}{\beta-1}}(\R^2),\qquad 
Y := \left\{ g\in W^{1,\frac{1+\beta}{\beta}}(\R^2)~ : ~ \int_{\R^2}|g|^{\frac{1+\beta}{\beta}}\gamma dx < \infty \right\},$$
$$
\tilde Y \equiv \{ u\in L^1_{loc}(\R^2) ~ : ~~ u\geq 0 ~ \mbox{a.e. in }\R^2, ~~ u^{\beta-1}\in Y \} ,
$$
where 
$$\gamma(x) := \sqrt{1+|x|^2}.$$ 
Thanks to Sobolev's embedding and Lemma \ref{lem_comp}: 
\begin{align}
	\label{Sob.1}
& Y\hookrightarrow L^q(\R^2)\quad\mbox{continuously for }\frac{1+\beta}{\beta}\leq q \leq \frac{2(\beta+1)}{\beta-1},\\
	\label{Sob.2}
& Y\hookrightarrow L^q(\R^2)\quad\mbox{compactly for }\frac{1+\beta}{\beta}\leq q < \frac{2(\beta+1)}{\beta-1}.
\end{align}
In particular, the embedding $Y\hookrightarrow X$ is compact.\\
For every measurable function $g : \R^2\to\R\cup\{\pm\infty\}$ we denote by $g_+ := \max \{g,0\}$ and $g_- := \min \{g,0\}$ its positive and negative part, respectively.\\

For given constants $\black{\black{\varrho_1}}, \black{\black{\varrho_2}}, \tau, \eps > 0$,  functions $u^*\in\tilde Y$ and $p^* \in H^{2s}(\R^2)$ such that $u^* , p^*\geq 0$ a.e. in $\R^2$, consider the time-discrete problem
\begin{align}\label{mar15.linpb.1_a}
\int_{\R^{2}}\left( \frac{u - u^*}{\tau}\phi + u\nabla p\cdot\nabla\phi  + \black{\varrho_1} |\nabla u^{\beta-1}|^{\frac{1}{\beta}-1}\nabla u^{\beta-1}\cdot\nabla\phi  + \eps u^{\frac{\beta-1}{\beta}}\phi\gamma\right) \; dx & = 0\quad \forall \phi\in Y ,\\
\label{mar15.linpb.2_b}
\frac{p - p^*}{\tau}+ (-\Delta)^{s}p - \black{\black{\varrho_2}}\Delta p - u^\beta = 0.
\end{align}

We divide the proof of Theorem \ref{main_thm} into several steps: we first show existence of solution to (\ref{mar15.linpb.1_a}), (\ref{mar15.linpb.2_b}) by Leray-Schauder fixed point theorem. Then we perform the limits $\eps \to 0$, $\tau \to  0$, $\black{\varrho_2}\to 0$ and $\black{\varrho_1} \to 0$ (in this order). The last limit is the most complicated because we need compactness for $u$ without relying on the term $ \black{\varrho_1}\int_{\R^2}|\nabla u^{\beta-1}|^{\frac{1}{\beta}-1}\nabla u^{\beta-1}\cdot\nabla\phi  dx$.

\subsection{Existence for (\ref{mar15.linpb.1_a})-(\ref{mar15.linpb.2_b})} 

For given constants $\varrho, \tau, \eps > 0$, $\sigma \in [0,1]$, functions $z\in X$, $u^*\in \tilde Y$ and $p^* \in H^{2s}(\R^2)$ such that $u^* , p^*\geq 0$ a.e. in $\R^2$, consider the linear problem in the variable $w$:
\begin{align}
& \int_{\R^{2}}( \tau^{-1}(|w|^{\frac{2-\beta}{\beta-1}}w - u^*)\phi +   \sigma z_+^{\frac{1}{\beta-1}}\nabla p\cdot\nabla\phi)dx + \black{\black{\varrho_1}}\int_{\R^2}|\na w|^{\frac{1}{\beta}-1}\nabla w\cdot\nabla\phi dx \nonumber\\ 
&\qquad + \eps\int_{\R^2}|w|^{\frac{1}{\beta}-1}w\phi\gamma dx = 0\quad \forall \phi\in Y , \label{mar15.linpb.1}\\
\label{mar15.linpb.2}
&\int_{\R^2}( \tau^{-1}(p - p^*)\psi + (-\Delta)^{s/2}p\cdot (-\Delta)^{s/2}\psi + \black{\black{\varrho_2}}\nabla p\cdot\nabla\psi - z_+^{\frac{\beta}{\beta-1}}\psi )dx = 0\quad\forall \psi \in H^1(\R^2).
\end{align}

We first solve \eqref{mar15.linpb.2}. We have that $z^{\frac{\beta}{\beta-1}}\in L^2(\R^2)$.  Lax-Milgram Lemma yields the existence of a unique solution
$p\in H^1(\R^2)$. Standard elliptic regularity results imply that $p\in H^{2}(\R^2)$ and consequently $\nabla p\in L^q(\R^2)$ for every $q\geq 2$.

We now solve \eqref{mar15.linpb.1}.  Since $z^{\frac{1}{\beta-1}}\in L^{2\beta}(\R^2)$ and $\nabla p \in L^q(\R^2)$ for every $q\geq 2$, the linear mapping
$$ \phi\in Y\mapsto \int_{\R^2}(-\tau^{-1}u^*\phi + \sigma z_+^{\frac{1}{\beta-1}} \nabla p\cdot\nabla\phi) dx \in \R  $$
is continuous. The nonlinear operator $\mathcal{A}: Y \to Y'$ defined by
\begin{align*}
\langle \mathcal A[w] , \phi \rangle &= 
\int_{\R^{2}}\tau^{-1}|w|^{\frac{2-\beta}{\beta-1}}w \phi dx + \black{\black{\varrho_1}}\int_{\R^2}|\na w|^{\frac{1}{\beta}-1}\nabla w\cdot\nabla\phi dx + \eps\int_{\R^2}|w|^{\frac{1}{\beta}-1}w\phi\gamma dx
\end{align*}
for every $\phi\in Y$ is strictly monotone, coercive, hemicontinuous. Therefore
the standard theory of monotone operators \cite{Zei90} yields the existence of a unique solution $w\in Y$ to \eqref{mar15.linpb.1}. 

We can now define the mapping 
$$F : (z,\sigma)\in X\times [0,1]\mapsto w\in X,
$$ 
where $(w,p) \in Y\times H^2 (\R^2)$ is the unique solution to \eqref{mar15.linpb.1}, \eqref{mar15.linpb.2}.
Clearly $F(\cdot,0)$ is a constant mapping. Moreover $F$ is continuous and also compact due to the compact embedding $Y\hookrightarrow X$, see Lemma \ref{lem_comp}. 

Next, we show that any fixed point is nonnegative and uniformly bounded in $\sigma$. We use a Stampacchia truncation argument. This method is generally used in nonlinear elliptic problems to show positivity, boundedness and higher regularity via the choice of particular test functions. In our case, by choosing $\phi=w_-$ and $\psi = p_-$ as test functions, we get 
\begin{align*}
\int_{\R^2}\tau^{-1}|w_-|^{\frac{\beta}{\beta-1}} dx +
\int_{\R^{2}} \black{\black{\varrho_1}}|\nabla w_-|^{(\beta+1)/\beta} dx
+\eps\int_{\R^2}|w_-|^{(\beta+1)/\beta} \gamma dx &= 0,\\
\int_{\R^2} \tau^{-1}(p_-)^2 + ((-\Delta)^{s/2}p_-)^2 + \black{\black{\varrho_2}}|\nabla p_-|^2dx &\le 0,
\end{align*}
from which it follows that $w, p\geq 0$ a.e.~in $\R^2$. 
The nonnegativity of $w$ and the $H^2(\R^2)$-regularity of $p$ allow for the formulation 
\begin{align}\label{mar15.fp.1}
\int_{\R^{2}}( \tau^{-1}(u - u^*)\phi + \sigma u\nabla p\cdot\nabla\phi)dx &+ \black{\varrho_1} \int_{\R^2}|\nabla u^{\beta-1}|^{1/\beta-1}\nabla u^{\beta-1}\cdot\nabla\phi dx\\ \nonumber 
&+ \eps\int_{\R^2}u^{(\beta-1)/\beta}\phi\gamma dx = 0\quad \forall \phi\in Y ,\\
\label{mar15.fp.2}
 \tau^{-1}(p - p^*) + (-\Delta)^{s}p& - \black{\varrho_2}\Delta p - u^\beta = 0 \qquad\mbox{in }\R^2 ,
\end{align}
where we defined $u\equiv w^{\frac{1}{\beta-1}}$.

We now search for uniform bounds with respect to $\sigma$: choosing $\phi = u^{\beta-1}$ in \eqref{mar15.fp.1} leads to
\begin{align*}
&\int_{\R^{2}}( \tau^{-1}(u - u^*)u^{\beta-1} + \black{\varrho_1} \int_{\R^2}|\nabla u^{\beta-1}|^{(\beta+1)/\beta} dx + \eps\int_{\R^2}u^{(\beta^2-1)/\beta}\gamma dx = -\sigma\int_{\R^2} u\nabla p\cdot\nabla u^{\beta-1} dx\\
&= \frac{\sigma(\beta-1)}{\beta}\int_{\R^2}u^\beta\Delta p dx.
\end{align*}
On the other hand, multiplying \eqref{mar15.fp.2} by $\sigma\Delta p\in L^2(\R^2)$ and integrating in $\R^2$ yields
\begin{align*}
\sigma\int_{\R^2}u^\beta\Delta p dx &= \sigma\int_{\R^2}(\tau^{-1}(p - p^*) + (-\Delta)^{s}p - \varrho\Delta p )\Delta p dx \\
&= -\tau^{-1} \sigma \int_{\R^2} (\nabla p - \nabla p^*)\cdot\nabla p dx -\sigma \int_{\R^2}|(-\Delta)^{s/2}\nabla p|^2 dx - \sigma\black{\varrho_2}\int_{\R^2}(\Delta p)^2 dx.
\end{align*}
Given that 
$$ (u - u^*)u^{\beta-1} \geq u^\beta/\beta - (u^*)^\beta/\beta,\qquad  (\nabla p - \nabla p^*)\cdot\nabla p \geq |\nabla p|^2/2 - |\nabla p^*|^2/2 , $$
we deduce
\begin{align}
\frac{1}{\tau}\int_{\R^2}\left(\frac{u^\beta}{\beta}  + \sigma\frac{\beta-1}{2\beta}|\nabla p|^2\right) dx + \black{\varrho_1} \int_{\R^2}|\nabla u^{\beta-1}|^{(\beta+1)/\beta} dx 
+ \eps\int_{\R^2}u^{(\beta^2-1)/\beta}\gamma dx& \nonumber \\
+\frac{\sigma(\beta-1)}{\beta}\int_{\R^2}|(-\Delta)^{s/2}\nabla p|^2 dx  + \black{\varrho_2}\frac{\sigma(\beta-1)}{\beta}\int_{\R^2}(\Delta p)^2 dx& \label{mid_est}\\
\leq \; \frac{1}{\tau}\int_{\R^2}\left(\frac{(u^*)^\beta}{\beta} + \sigma\frac{\beta-1}{2\beta}|\nabla p^*|^2\right) dx .&\nonumber 
\end{align}
The above estimate yields a bound for $w=u^{\beta-1}$ in $Y$ which is uniform in $\sigma$. Together with the embedding $Y\hookrightarrow X$
we have that $u$ belongs to $X$, with $\|u\|_X$ bounded uniformly with respect to $\sigma$.
Leray-Schauder fixed point theorem yields the existence of a fixed point $w=u^{\beta-1}\in Y$ for $F(\cdot,1)$, i.e. a solution $(u,p)\in \tilde Y_\beta\times H^2(\R^2)$ to
\begin{align}\label{mar15.epstau.1}
\int_{\R^{2}}{\frac{u - u^*}{\tau}}\phi dx+\int_{\R^{2}} u\nabla p\cdot\nabla\phi dx &+ \black{\varrho_1} \int_{\R^2}|\nabla u^{\beta-1}|^{1/\beta-1}\nabla u^{\beta-1}\cdot\nabla\phi dx\\
\nonumber 
&+ \eps\int_{\R^2}u^{(\beta-1)/\beta}\phi\gamma dx = 0
\quad {\forall\phi\in Y} ,\\
\label{mar15.epstau.2}
 \frac{p - p^*}{\tau} + (-\Delta)^{s}p -& \black{\varrho_2}\Delta p - u^\beta = 0 \qquad\mbox{in }\R^2 ,
\end{align}
such that $u, p\geq 0$ a.e. in $\R^2$ and (\ref{mid_est}) holds for $\sigma=1$:
\begin{align}\label{mar15.ei.1}
\frac{1}{\tau}\int_{\R^2}\left(\frac{u^\beta}{\beta} + \frac{\beta-1}{2\beta}|\nabla p|^2\right) dx + \black{\varrho_1} \int_{\R^2}|\nabla u^{\beta-1}|^{\frac{\beta+1}{\beta}} dx + \eps\int_{\R^2}u^{(\beta^2-1)/\beta}\gamma dx& \\
\nonumber
\qquad +\frac{\beta-1}{\beta}\int_{\R^2}|(-\Delta)^{s/2}\nabla p|^2 dx + \frac{\black{\varrho_2(\beta-1)}}{\beta}\int_{\R^2}(\Delta p)^2 dx
\leq \frac{1}{\tau}\int_{\R^2}\left(\frac{(u^*)^\beta}{\beta} + \frac{\beta-1}{2\beta}|\nabla p^*|^2\right) dx &.
\end{align}

\subsection{The limit $\eps\to 0$} The next step is to take the $\lim_{\eps\to 0}$ in \eqref{mar15.epstau.1}-\eqref{mar15.ei.1}. 

The  uniform bound of $u^{\beta-1}$  in $W^{1,(1+\beta)/\beta}(\R^2)$  (see (\ref{mar15.ei.1})) and Sobolev's embedding insure that for every $R>0$ there exists a subsequence 
$u^{(\eps,R)}$ of $u^{(\eps)}$ such that
$$
u^{(\eps,R)}\to u\quad\mbox{strongly in }L^{q}(B_R),~~ 
1\leq q < 2(\beta+1) .~~
R>0,
$$
The function $u$ is the weak limit of $u^{(\eps)}$ in $L^{2(\beta+1)}(\R^2)$.
By a Cantor diagonal argument we can find a subsequence (not relabeled) of $u^{(\eps)}$ such that 
$$
u^{(\eps)}\to u\quad\mbox{strongly in }L^{q}(B_R),~~
1\leq q < 2(\beta+1),~~R\in\N,
$$
as well as $u^{(\eps)} \to u$ a.e.~in $\R^2$. As a consequence
\begin{align}
{(u^{(\eps)})^\beta} & {\to u^\beta \;\; \textrm{strongly in } \; L^2(B_R)},\quad u^{(\eps)} \to u \;\; \textrm{strongly in } \;  L^{2/s}(B_R),\quad R>0. \label{strongL2}
\end{align}
Going back to the limit in \eqref{mar15.epstau.2} and \eqref{mar15.epstau.1} we have that as $\eps \to 0$
\begin{align*}
 \int_{\R^2} (u^{(\eps)})^\beta \psi \;dx &\to \int_{\R^2} u^\beta \psi \;dx, \quad \forall \psi\in C^\infty_c(\R^2), \\
 \int_{\R^{2}} u^{(\eps)} \nabla p^{(\eps)} \cdot\nabla\phi dx& \to \int_{\R^2}\int_{\R^{2}} u\nabla p \cdot\nabla\phi dx, \quad \forall {\phi\in C^\infty_c(\R^2)},
\end{align*}
where we used (\ref{strongL2}) for the first limit, and (\ref{strongL2}) together with $\nabla p^{(\eps)} \rightharpoonup \nabla p$ in $L^{2/(1-s)}(\R^2)$ 
 to obtain the second limit (remember that $p^{(\eps)}$ is relatively weakly compact in $H^{1+s}(\R^2)$). Summarizing, taking the limit $\eps\to 0$ in \eqref{mar15.epstau.1}, \eqref{mar15.epstau.2}
and subsequently employing a standard density argument we get
\begin{align}\label{mar15.tau.1}
\int_{\R^{2}}( \tau^{-1}(u - u^*)\phi + u\nabla p\cdot\nabla\phi 
+ {\varrho_1}|\nabla u^{\beta-1}|^{1/\beta-1}
\nabla u^{\beta-1}\cdot\nabla\phi ) dx &= 0\quad 
\forall \phi\in W^{1,\frac{1+\beta}{\beta}}(\R^2) ,\\
\label{mar15.tau.2}
 \tau^{-1}(p - p^*) + (-\Delta)^{s}p - {\varrho_2}\Delta p - u^\beta &= 0 \qquad\mbox{in }\R^2 .
\end{align}
Moreover $u, p\geq 0$ a.e. in $\R^2$ and 
\begin{align}\label{mar15.ei.2}
\frac{1}{\tau}\int_{\R^2}\left(\frac{u^\beta}{\beta} + \frac{\beta-1}{2\beta}|\nabla p|^2\right) dx 
& + {\varrho_1} \int_{\R^2}|\nabla u^{\beta-1}|^{\frac{1+\beta}{\beta} } dx \\
\nonumber
\qquad +\frac{\beta-1}{\beta}\int_{\R^2}|(-\Delta)^{s/2}\nabla p|^2 dx + \frac{\black{\varrho_2}(\beta-1)}{\beta}\int_{\R^2}(\Delta p)^2 dx
\leq& \frac{1}{\tau}\int_{\R^2}\left(\frac{(u^*)^\beta}{\beta} + \frac{\beta-1}{2\beta}|\nabla p^*|^2\right) dx .
\end{align}
Let $G_\delta(x)\equiv\min\{x/\delta,1\}$ for every $x\geq 0$.
By testing \eqref{mar15.tau.2} against $G(p)\in L^2(\R^2)$ and exploiting the fact that $\int_{\R^2}G_\delta(p)(-\Delta)^s p\, dx\geq 0$ one deduces the estimate
$$
\int_{\R^2}G_\delta(p)p dx\leq \int_{\R^2}G_\delta(p)p^* dx
+\tau\int_{\R^2}G_\delta(p)u^\beta dx\leq
\int_{\R^2}p^* dx + \tau\int_{\R^2}u^\beta dx \leq C .
$$
Taking the limit $\delta\to 0$ in the above inequality (by monotone convergence) yields $p\in L^1(\R^2)$.

Let $\eta_R$ as in the statement of Lemma \ref{frac_lapl_eta_R}.
Multiplying \eqref{mar15.tau.2} by $\eta_R$, integrating in $\R^2$ and integrating by parts leads to
\begin{align}\label{mass_cons_tau}
 \tau^{-1}\int_{\R^2}(p - p^*)\eta_R dx = \int_{\R^2}(u^\beta\eta_R 
 + {\varrho_2} p \Delta\eta_R - p (-\Delta)^s\eta_R )dx. 
 \end{align}
Since $\|(-\Delta)^s\eta_R\|_{L^\infty}\to 0$ as $R\to\infty$ (see Lemma \ref{frac_lapl_eta_R}) and $p\in L^1(\R^2)$, the bound for the mass of $p$ follows
$$ \int_{\R^2}p dx = \int_{\R^2}p^* dx + \tau\int_{\R^2}u^\beta dx . $$
At this point we have proved the existence of sequences {$(u_k)_{k\in\N}\subset H^1(\R^2)$, $(p_k)_{k\in\N} \subset H^2(\R^2)$} such that $u_0=u_{in}$, $p_{0}=p_{in}$, and for $k\geq 1$
 $u_k, p_k\geq 0$ a.e. in $\R^2$,  
\begin{align}\label{mar15.d.1}
\int_{\R^{2}}( \tau^{-1}(u_k - u_{k-1})\phi + u_k\nabla p_k\cdot\nabla\phi)dx + {\varrho_1} \int_{\R^2}|\nabla u_k^{\beta-1}|^{1/\beta-1}\nabla u_k^{\beta-1}\cdot\nabla\phi dx&  = 0\quad \forall \phi\in H^1(\R^2) ,\\
\label{mar15.d.2}
 \tau^{-1}(p_k - p_{k-1}) + (-\Delta)^{s}p_k - {\varrho_2}\Delta p_k - u_k^\beta & = 0 \qquad\mbox{in }\R^2 ,
\end{align}
with the estimates 
\begin{align}
\label{mar15.dei.0}
\frac{1}{\tau}\int_{\R^2}\left(\frac{u_k^\beta}{\beta} + \frac{\beta-1}{2\beta}|\nabla p_k|^2\right) dx &+ {\varrho_1} \int_{\R^2}|\nabla u_k^{\beta-1}|^{(1+\beta)/\beta} dx \\
\nonumber
\quad +\frac{\beta-1}{\beta}\int_{\R^2}|(-\Delta)^{s/2}\nabla p_k|^2 dx + \frac{{\varrho_2}(\beta-1)}{\beta}\int_{\R^2}(\Delta p_k)^2 dx
& \leq \frac{1}{\tau}\int_{\R^2}\left(\frac{(u_{k-1})^\beta}{\beta} + \frac{\beta-1}{2\beta}|\nabla p_{k-1}|^2\right) dx ,\\
\int_{\R^2}p_k dx& =\int_{\R^2}p_{k-1} dx + \tau\int_{\R^2}u_k^\beta dx.\label{mar15.massk}
\end{align}
Choose $T>0$ arbitrary. Define $N=T/\tau$, $u^{(\tau)}(t) = u_0\chi_{\{0\}}(t) + \sum_{k=1}^N u_{k}\chi_{((k-1)\tau, k\tau]}(t)$,
$p^{(\tau)}(t) = p_0\chi_{\{0\}}(t) + \sum_{k=1}^N p_{k}\chi_{((k-1)\tau, k\tau]}(t)$.
Moreover define the backward finite difference w.r.t. time $D_\tau$ as
$$ D_\tau f (t) \equiv \tau^{-1}(f(t)-f(t-\tau)),\qquad t\in [\tau,T]. $$
We can rewrite \eqref{mar15.d.1}--\eqref{mar15.massk} with the new notation. For all $\phi\in L^2(0,T; H^1(\R^2))\cap L^{\frac{1+\beta}{\beta}}(0,T; W^{\frac{1+\beta}{\beta}}(\R^2))$ and  $ \psi\in L^2(0,T; H^1(\R^2))$ we have
\begin{align}\label{mar15.1}
&\int_0^T\int_{\R^{2}}( (D_\tau\utau)\phi + \utau\nabla \ptau\cdot\nabla\phi)dx dt \\
&\nonumber
\quad + {\varrho_1}\int_0^T \int_{\R^2}|\nabla (\utau)^{\beta-1}|^{1/\beta-1}\nabla (\utau)^{\beta-1}\cdot\nabla\phi \, dx dt = 0,\\
\label{mar15.2}
&\int_0^T\int_{\R^{2}}( (D_\tau\ptau)\psi + ((-\Delta)^{s/2}\ptau)((-\Delta)^{s/2}\psi) + {\varrho_2}\nabla\ptau\cdot\nabla\psi - (\utau)^\beta\psi)dx = 0, \\
\label{mar15.dei}
&\int_{\R^2}\left(\frac{(\utau)^\beta}{\beta} + \frac{\beta-1}{2\beta}|\nabla \ptau|^2\right) dx + {\varrho_1} \int_0^t\int_{\R^2}|\nabla (\utau)^{\beta-1}|^{(1+\beta)/\beta} 
dx dt' \\
\nonumber
&\qquad + \frac{{\varrho_2}(\beta-1)}{\beta}\int_0^t\int_{\R^2}(\Delta \ptau)^2 dx dt'
+\frac{\beta-1}{\beta}\int_0^t\int_{\R^2}|(-\Delta)^{s/2}\nabla \ptau|^2 dx dt' \\
\nonumber
&\qquad\leq \int_{\R^2}\left(\frac{(u_{in})^\beta}{\beta} + \frac{\beta-1}{2\beta}|\nabla p_{in}|^2\right) dx ,\\
& \int_{\R^2}\ptau(t) dx \,\le  \int_{\R^2}p_{in} dx + C t\qquad t\in [0,T],\label{mar15.mass.tau}
\end{align}
where the constant in \eqref{mar15.mass.tau} only depends on the entropy at initial time.\medskip\\

\subsection{The limit $\tau \to 0$} 
We first estimate the time derivative of the density function. Let $R>0$ arbitrary,
$Q_{R,T}\equiv B_R\times (0,T)$. For any $\phi\in C^\infty_c(Q_{R,T})$
\begin{align*}
&\left| \int_0^T\int_{\R^{2}} (D_\tau\utau)\phi  \;dxdt \right| \\ 
&\leq \left|  \int_0^T\int_{\R^{2}} \utau\nabla \ptau\cdot \nabla \phi \;dxdt \right|
 + {\varrho_1}  \left|  \int_0^T\int_{\R^{2}} |\nabla (\utau)^{\beta-1}|^{1/\beta-1} \nabla (\utau)^{\beta-1} \cdot \nabla \phi \;dxdt \right|  \\
&\le \|\nabla \ptau \|_{L^2(0,T; L^{\frac{2}{1-s}}(\R^2))}
\| \utau \|_{L^{\infty}(0,T; L^{\beta+1}(\R^2))}
\| \nabla \phi \|_{L^{2}(0,T; L^{\frac{2\beta}{2+(1-s)\beta}}(\R^2))}\\
&\qquad + \varrho_1 \|\nabla (\utau)^{\beta-1} \|_{L^{\frac{\beta+1}{\beta}}(\R^2)}^{\frac{1}{\beta}} 
\|\na \phi \|_{L^{\frac{\beta+1}{\beta}}(\R^2)} \\
&\le C(T) \|\phi\|_{L^{2}(0,T;\, W^{1,\frac{\beta+1}{\beta}}\cap W^{1,\frac{2\beta}{2+(1-s)\beta}}(\R^2))}
\end{align*}
using \eqref{mar15.dei}. This yields 
\begin{align}
\label{est.ut}
\| D_\tau\utau \|_{L^{2}(0,T;\, (W^{1,\frac{\beta+1}{\beta}}\cap W^{1,\frac{2\beta}{2+(1-s)\beta}}(\R^2))' ) } &\le C(T).
\end{align}
In particular 
\begin{align}
\label{est.ut.loc}
\| D_\tau\utau \|_{L^{2}(0,T; 
W^{-1,\frac{\lambda}{\lambda-1} }(B_R))} \le C(T,R),\quad
\forall R>0,\quad\lambda\equiv\max\left\{ \frac{\beta+1}{\beta} , \frac{2\beta}{2+(1-s)\beta}\right\} . 
\end{align}
The compact Sobolev embedding $W^{1,2(\beta+1)/\beta}(B_R)\hookrightarrow L^{2(\beta+1)/\beta-\epsilon}(B_R)$, valid for every $\epsilon>0$, allows us to apply Aubin-Lions Lemma in the version of \cite{CheJueLiu14}
and obtain, for any $R>0$, the existence of a subsequence $u^{(\tau,R)}$ of $\utau$ such that 
$$ u^{(\tau,R)} \to u \quad \textrm{strongly in }\; L^2(0,T; L^{2}(B_R)). $$
The limit function $u$ is unique and coincides with the weak-* limit of $u^{(\tau)}$
in $L^\infty(0,T; L^\beta(\R^2))$. A Cantor diagonal argument allows us to find a subsequence of $\utau$ (which we denote again with $\utau$) such that 
$$ u^{(\tau)} \to u \quad \textrm{strongly in }\; L^1(0,T; L^{1}(B_R)),\qquad
\forall R\in\N , $$
and
\begin{align}\label{aetau}
\utau \to u \quad \textrm{a.e. in } \; \R^2 \times [0,T].
\end{align}
Since $\utau \in L^\infty(0,T,L^{\beta}(\R^2)) \cap L^{\frac{\beta^2-1}{\beta}}(0,T,L^{2(\beta+1)}(\R^2))$, a straightforward interpolation yields 
\begin{align}\label{int_u^2}
\|\utau\|_{L^r(0,T,L^{r}(\R^2))} \le C,\qquad
r = \frac{3\beta^2 + \beta - 2}{2\beta}.
\end{align}
Since $r>\beta$, thanks to (\ref{aetau}) it follows
\begin{align}\label{vac.3}
\utau\to u \quad \textrm{strongly in}\; L^\beta(0,T; L^{\beta}(B_R)),\quad\forall R>0.
\end{align}
Hence as $\tau \to 0$:
$$
\int_0^T \int_{\R^2} (\utau)^\beta \psi \;dxdt  \to \int_0^T \int_{\R^2} u^\beta \psi \;dxdt,\quad \textrm{for all}\; \psi \in C^0_c(\R^2\times (0,T)).
$$
Moreover directly from (\ref{mar15.dei}) 
\begin{align*}
(\utau)^{\beta-1} &\rightharpoonup u^{\beta-1}\quad 
\mbox{weakly in }L^{(\beta+1)/\beta}(0,T; W^{1,(\beta+1)/\beta}(\R^2)),\\
\utau &\rightharpoonup u\quad \mbox{weakly* in }L^\infty(0,T; L^\beta(\R^2)).
\end{align*}
From  \eqref{est.ut}, \eqref{vac.3} it follows
$$ D_\tau \utau  \rightharpoonup \partial_t u  \quad \textrm{weakly in }\; 
L^{2}(0,T;\, (W^{1,\frac{\beta+1}{\beta}}\cap W^{1,\frac{2\beta}{2+(1-s)\beta}}(\R^2))' ). $$
%
 Since $\ptau$ is uniformly bounded in $L^\infty(0,T,L^1(\R^2))$ and $\nabla \ptau$ is uniformly bounded in $L^\infty(0,T,L^2(\R^2))$, 
 {Gagliardo-Nirenberg} and the entropy inequality \eqref{mar15.dei} yield 
 \begin{align}\label{L2_ptau}
 \| \ptau \|_{L^\infty(0,T,H^1(\R^2))} + \|\ptau\|_{L^2(0,T; H^{s+1}(\R^2))}
 + \sqrt{\varrho_2}\|\ptau\|_{L^2(0,T; H^{2}(\R^2))} \le C, 
 \end{align}
 where $C$ only depends on the initial data. Hence there exists a subsequence of $\ptau$ (which we denote again with $\ptau$) such that
\begin{align*}
\ptau &\rightharpoonup p\quad\mbox{weakly in }L^2(0,T; H^{s+1}(\R^2) ),\\
\ptau &\rightharpoonup^* p\quad\mbox{weakly* in }L^{\infty}(0,T,H^1(\R^2)) . 
\end{align*}
In particular
\begin{align}
%
\label{L2_p} 
\| p \|_{L^\infty(0,T,H^1(\R^2))} + \| p \|_{L^2(0,T,H^{s+1}(\R^2))}  \le C.
\end{align}
Also, by Sobolev's embedding,
$$
\na\ptau\rightharpoonup \na p\quad\mbox{weakly in }L^2(0,T; L^{2/(1-s)}(\R^2)) .
$$ 
The strong convergence $\utau\to u$ in $L^{2}(0,T; L^\beta(B_R))$ for every $R>0$,
the weak convergence of $\na\ptau$ in $L^2(0,T; L^{2/(1-s)}(\R^2))$, and the assumption $s > \frac{1}{\beta}$ imply
$$
\int_0^T\int_{\R^{2}} \utau\nabla \ptau \cdot\nabla\phi\;dx dt
\to
\int_0^T\int_{\R^{2}} u \nabla p\cdot\nabla\phi\;dx dt,
\quad \textrm{for all}\; \phi \in C^1_c(\R^2\times (0,T)).
$$
Let us look at the discrete time derivatives of the pressure function. Thanks to (\ref{int_u^2}) we have 
  \begin{align*}
\left| \int_0^T\int_{\R^2}(\utau)^\beta\psi dx dt \right|
 &\leq \|(\utau)^\beta\|_{L^{r/\beta}(\R^2\times (0,T))} 
 \|\psi\|_{L^{r/(r-\beta)}(\R^2\times (0,T))}\\
 &\leq C(\varrho_1)\|\psi\|_{L^{r/(r-\beta)}(\R^2\times (0,T))},
 \end{align*}
while \eqref{mar15.dei} implies
\begin{align*}
\left|\int_0^T\int_{\R^2}(-\Delta)^s\ptau\,\psi dx dt \right| + 
\varrho_2\left|\int_0^T\int_{\R^2}\Delta\ptau\,\psi dx dt\right| \leq
C\|\psi\|_{L^2(\R^2\times (0,T))} .
\end{align*}
We deduce
\begin{align}\label{boh_p}
\left| \int_0^T\int_{\R^{2}} (D_\tau \ptau)\psi  \;dxdt \right|  \le 
C(\varrho_1) \|\psi\|_{L^{2}\cap L^{r/(r-\beta)}(\R^2\times (0,T))}.
\end{align}
It follows
\begin{align}
\label{}
D_\tau\ptau\rightharpoonup \pa_t p\quad\mbox{weakly in }(L^{2}\cap L^{r/(r-\beta)}(\R^2\times (0,T)))'.
\end{align}
Since $\ptau$ is bounded in $L^\infty(0,T,H^1(\R^2))$ and $D_\tau \ptau$ is bounded in $(L^2\cap L^{r/(r-\beta)}(\R^2\times (0,T)))'$, 
we can invoke Aubin-Lions lemma to deduce, for every $R\in\N$, the existence of a subsequence $p^{(\tau,R)}$ of $\ptau$ such that $p^{(\tau,R)} \to p$ strongly in $L^1(0,T,L^1(B_R))$, for every $R\in\N$. A Cantor's diagonal argument yields 
the existence of a subsequence of $\ptau$ (which we call again $\ptau$) such that
\begin{align}\label{p_a.e.}
\ptau \to p\; \textrm{strongly in}\; L^1(0,T,L^1(B_R))\quad\forall R\in\N ,\quad \ptau \to p \; \textrm{a.e in } \; \R^2.
\end{align}
%
%
At this point we can take the limit $\tau\to 0$ in \eqref{mar15.1} and \eqref{mar15.2}, which yields (after a suitable density argument)
\begin{align}\label{mar15.eq.1}
\int_0^T & \langle\pa_t u,\phi\rangle dt
+\int_0^T\int_{\R^{2}}u\nabla p\cdot\nabla\phi\, dx dt + {\varrho_1} \int_0^T \int_{\R^2}|\nabla u^{\beta-1}|^{1/\beta-1}\nabla u^{\beta-1}\cdot\nabla\phi \, dx dt = 0\\
\nonumber
&\qquad\forall\phi \in L^2(0,T; W^{1,\frac{2\beta}{2+(1-s)\beta}}(\R^2))\cap L^{\frac{1+\beta}{\beta}}(0,T; W^{\frac{1+\beta}{\beta}}(\R^2)) ,\nonumber\\ 
\label{mar15.eq.2}
\int_0^T &\langle\pa_t p,\psi\rangle dt + \int_0^T\int_{\R^2}( (-\Delta)^{s} p - u^\beta)\psi dx dt
- \varrho_2\int_0^T\int_{\R^2}\psi\Delta p\, dx dt = 0 \\
&\qquad\forall\psi \in L^{2}\cap L^{\frac{r}{r-\beta}}(\R^2\times (0,T)),\nonumber
\end{align}
where $r = \frac{3\beta^2 + \beta - 2}{2\beta}$ is defined in \eqref{int_u^2}.

Thanks to the lower weak semicontinuity of the $L^p$ norm we deduce from \eqref{mar15.dei} the following entropy inequality:
\begin{align}\label{mar15.ei}
&\int_{\R^2}\left(\frac{u^\beta}{\beta} + \frac{\beta-1}{2\beta}|\nabla p|^2\right) dx + \black{\varrho_1} \int_0^t\int_{\R^2}|\nabla u|^{\frac{\beta+1}{\beta}} dx dt' 
+ \frac{\black{\varrho_2}(\beta-1)}{\beta}\int_0^t\int_{\R^2}(\Delta p)^2 dx dt'\\
\nonumber
&\qquad +\frac{\beta-1}{\beta}\int_0^t\int_{\R^2}|(-\Delta)^{s/2}\nabla p|^2 dx dt' 
\leq \int_{\R^2}\left(\frac{(u_{in})^\beta}{\beta} + \frac{\beta-1}{2\beta}|\nabla p_{in}|^2\right) dx .
\end{align}
{{Furthermore, thanks to the a.e. convergence of $\ptau$ (\ref{p_a.e.}) we can apply Fatou's Lemma in \eqref{mar15.mass.tau} and get
\begin{align}
\int_{\R^2}p(t) dx  \,{\le}  \int_{\R^2}p_{in} dx &+ C t,\qquad t\in [0,T].\label{mar15.mass}
\end{align}}}

\subsection{The limit $\black{\varrho_2} \to 0$} 

From the entropy inequality \eqref{mar15.ei} and the mass conservation \eqref{mar15.mass} we deduce the following $\rho_2-$uniform bounds:
\begin{align}
\label{est.apr11.u}
\|u\|_{L^\infty(0,T; L^2(\R^2))} + 
\|u^{\beta-1}\|_{L^{\frac{\beta+1}{\beta}}(0,T; W^{1,\frac{\beta+1}{\beta}}(\R^2))} &\leq C(\rho_1,T) ,\\
\label{est.apr11.p}
\|p\|_{L^\infty(0,T; H^1(\R^2))} + 
\|p\|_{L^2(0,T; H^{s+1}(\R^2))} + 
\sqrt{\rho_2}\|\Delta p\|_{L^2(0,T; L^2(\R^2))} &\leq C(T).
\end{align}
Moreover, from \eqref{mar15.eq.1}, \eqref{mar15.eq.2}, \eqref{est.apr11.u}, \eqref{est.apr11.p} we deduce $\rho_2-$uniform bounds
for the time derivatives of $u$, $p$:
\begin{align}
\|\pa_t u\|_{L^{2}(0,T;\, (W^{1,\frac{\beta+1}{\beta}}\cap W^{1,\frac{2\beta}{2+(1-s)\beta}}(\R^2))' )} + 
\|\pa_t p\|_{L^2(0,T;\, (L^2 \cap L^{\frac{r}{r-\beta}}(\R^2))')} \leq C(T,\rho_1).
\label{est.apr11.dt}
\end{align}
Estimates \eqref{est.apr11.u}--\eqref{est.apr11.dt}
and the compact Sobolev embeddings $W^{1,(\beta+1)/\beta}(\Omega)\hookrightarrow L^{2(\beta+1)/(\beta-1)-\epsilon}(\Omega)$, $H^{s+1}(\Omega)\hookrightarrow W^{1,2/(1-s)-\epsilon}(\Omega)$,
valid for every bounded open $\Omega\subset\R^2$ and $\epsilon>0$,
allow us to apply Aubin-Lions Lemma and deduce, for every $R\in\N$, the existence of subsequences $u^{(\rho_2,R)}$, $p^{(\rho_2,R)}$ of $u^{(\rho_2)}$, $p^{(\rho_2)}$ such that
\begin{align*}
u^{(\rho_2,R)}\to u\quad\mbox{strongly in }L^1(0,T; L^{1}(B_R)),
\quad 
p^{(\rho_2,R)}\to p\quad\mbox{strongly in }L^1(0,T; L^1(B_R)),
\end{align*}
for every $R\in\N$. Once again, a Cantor diagonal argument allows us to find subsequences (not relabeled) of $u^{(\rho_2)}$, $p^{(\rho_2)}$ such that
\begin{align*}
u^{(\rho_2)}\to u\quad\mbox{strongly in }L^1(0,T; L^{1}(B_R)),
\quad 
p^{(\rho_2)}\to p\quad\mbox{strongly in }L^1(0,T; L^{1}(B_R)),
\end{align*}
for every $R\in\N$.
Bounds \eqref{est.apr11.u}, \eqref{est.apr11.p} also imply (up to subsequences) the following weak convergence relations 
\begin{align*}
u^{(\rho_2)} &\rightharpoonup^* u\quad\mbox{weakly-* in }L^\infty(0,T; L^{2}(\R^2)),\\
\na (u^{(\rho_2)})^{\beta-1} &\rightharpoonup \na u^{\beta-1}\quad\mbox{weakly in }L^{(1+\beta)/\beta}(0,T; L^{(1+\beta)/\beta}(\R^2)),\\
p^{(\rho_2)} &\rightharpoonup^* p\quad\mbox{weakly-* in }L^\infty(0,T; H^{1}(\R^2)),\\
p^{(\rho_2)} &\rightharpoonup p\quad\mbox{weakly in }L^2(0,T; H^{s+1}(\R^2)).
\end{align*}
Thanks to the convergence relations stated above,
taking the limit $\rho_2\to 0$ in \eqref{mar15.eq.1}, \eqref{mar15.eq.2} is at this point straightforward and leads to
\begin{align}\label{apr11.eq.1}
	\int_0^T & \langle\pa_t u,\phi\rangle dt
	+\int_0^T\int_{\R^{2}}u\nabla p\cdot\nabla\phi\, dx dt + {\varrho_1} \int_0^T \int_{\R^2}|\nabla u^{\beta-1}|^{1/\beta-1}\nabla u^{\beta-1}\cdot\nabla\phi \, dx dt = 0\\
	\nonumber
	&\qquad\forall\phi \in L^2(0,T; W^{1,\frac{2\beta}{2+(1-s)\beta}}(\R^2))\cap L^{\frac{1+\beta}{\beta}}(0,T; W^{\frac{1+\beta}{\beta}}(\R^2)),\nonumber\\ 
	\label{apr11.eq.2}
	\int_0^T &\langle\pa_t p,\psi\rangle dt + \int_0^T\int_{\R^2}( (-\Delta)^{s} p - u^\beta)\psi dx dt = 0 \\
	&\qquad\forall\psi \in L^{2}\cap L^{\frac{r}{r-\beta}}(\R^2\times (0,T)),\nonumber
\end{align}
where $r = \frac{3\beta^2 + \beta - 2}{2\beta}$ is defined in \eqref{int_u^2}.

The same convergence relations yield
\begin{align}\label{apr11.ei}
&\int_{\R^2}\left(\frac{u^\beta}{\beta} + \frac{\beta-1}{2\beta}|\nabla p|^2\right) dx + \black{\varrho_1} \int_0^t\int_{\R^2}|\nabla u^{\beta-1}|^{\frac{\beta+1}{\beta}} dx dt' \\
\nonumber
&\qquad +\frac{\beta-1}{\beta}\int_0^t\int_{\R^2}|(-\Delta)^{s/2}\nabla p|^2 dx dt' 
\leq \int_{\R^2}\left(\frac{(u_{in})^\beta}{\beta} + \frac{\beta-1}{2\beta}|\nabla p_{in}|^2\right) dx .
\end{align}
We also point out that \eqref{mar15.mass} holds true also after taking the limit $\rho_2\to 0$.

\subsection{The limit $\black{\varrho_1} \to 0$} 

In the rest of the paper we denote $\rho_1$ with $\rho$. 

As a preliminary step, we are going to prove a uniform bound for $\na\prho$.
By interpolation we obtain
\begin{align*}
\|\na\prho\|_{L^{q}(\QT)}\leq 
\|\na\prho\|_{L^\infty(0,T; L^2(\R^2))}^\lambda
\|\na\prho\|_{L^{(1-\lambda)q}(0,T; L^{\frac{2}{1-s}}(\R^2))}^{1-\lambda},
\end{align*}
with $\frac{1}{q} = \frac{\lambda}{2}+\frac{(1-\lambda)(1-s)}{2}$, 
$0\leq\lambda\leq 1$. 
The assumption $s > \beta^{-1}$ allows for the choice  
$q>2(\beta+1)/\beta$ such that $(1-\lambda)q\leq 2$
and therefore
\begin{align*}
\|\na\prho\|_{L^{2(\beta+1)/\beta+\epsilon}(\QT)}\leq C
\|\na\prho\|_{L^\infty(0,T; L^2(\R^2))}^\lambda
\|\na\prho\|_{L^{2}(0,T; L^{2/(1-s)}(\R^2))}^{1-\lambda}\quad
\forall\epsilon\in [0,\epsilon_0),
\end{align*}
for some $\epsilon_0>0$.
Since $\na\prho$ is bounded in $L^2(0,T; H^s(\R^2))$, by Sobolev's embedding
it is also bounded in $L^2(0,T; L^{2/(1-s)}(\R^2))$. Together with the uniform bound
in $L^\infty(0,T; L^2(\R^2))$, we conclude
\begin{align}
\label{vac.aa}
\exists\epsilon_0>0\, : \quad
\|\na\prho\|_{L^{2(\beta+1)/\beta + \epsilon}(\QT)}\leq C\quad
\forall\epsilon\in [0,\epsilon_0).
\end{align}
Now we wish to prove a uniform bound for $\urho$ in $L^{\beta+1}(\R^2\times (0,T))$. Let us choose $\phi=\prho$, $\psi=\urho$ in
\eqref{apr11.eq.1}, \eqref{apr11.eq.2}, respectively, and sum the resulting equations.
We obtain
\begin{align}
\label{vac.a}
\int_0^T\int_{\R^2} & (\urho)^{\beta+1}dx dt = 
\int_0^T\int_{\R^2}\urho (-\Delta)^s\prho dx dt + 
\int_0^T\int_{\R^2}\urho |\na\prho|^2 dx dt\\
\nonumber
& +
\int_{\R^2}\urho(T)\prho(T)dx - \int_{\R^2}u_{in}p_{in}dx +
\rho\int_0^T\int_{\R^2}\na (\urho)^{\beta-1}\cdot\na\prho dx dt .
\end{align}
Let us bound the terms on the right-hand side of \eqref{vac.a} by using bounds
\eqref{mar15.mass}, \eqref{apr11.ei}. Applying H\uml older and Gagliardo-Nirenberg inequalities yields
\begin{align*}
\int_0^T\int_{\R^2} & \urho (-\Delta)^s\prho dx dt\\
&\leq
\|\urho\|_{L^\infty(0,T; L^\beta(\R^2))}\|(-\Delta)^s\prho\|_{L^1(0,T; L^{\frac{\beta}{\beta-1}}(\R^2))}\\
&\leq C
\|\urho\|_{L^\infty(0,T; L^\beta(\R^2))}
\|\prho\|_{L^2(0,T; H^{1+s}(\R^2))}^\alpha
\|\prho\|_{L^\infty(0,T; L^1(\R^2))}^{1-\alpha}
\\
&\leq C ,
\end{align*}
for some $\alpha\in [0,1]$. Let us then consider
\begin{align*}
\int_0^T\int_{\R^2} & \urho |\na\prho|^2 dx dt\leq
\|\urho\|_{L^{\beta+1}(\QT)}\|\na\prho\|_{L^{2(\beta+1)/\beta}(\QT)}^2\leq 
C\|\urho\|_{L^{\beta+1}(\QT)} 
\end{align*}
thanks to \eqref{vac.aa}. Next we notice that
\begin{align*}
\int_{\R^2}\urho(T)\prho(T)dx\leq \|\urho\|_{L^\infty(0,T; L^2(\R^2))}
\|\prho\|_{L^\infty(0,T; L^2(\R^2))}\leq C .
\end{align*}
Finally, Gagliardo-Nirenberg inequality allows us to write
\begin{align*}
\rho\int_0^T\int_{\R^2} & |\na (\urho)^{\beta-1}|^{1/\beta-1}\na (\urho)^{\beta-1}\cdot\na\prho dx dt\\
&\leq
\rho\|\na (\urho)^{\beta-1}\|_{L^{\frac{\beta+1}{\beta}}(\QT)}^{1/\beta}
\|\na\prho\|_{L^\frac{\beta+1}{\beta}(\QT)}\\
&\leq
C\rho\|\na (\urho)^{\beta-1}\|_{L^{\frac{\beta+1}{\beta}}(\QT)}^{1/\beta}
\|\prho\|_{L^\infty(0,T; L^1(\R^2))}^{\alpha}
\|\prho\|_{L^{2}(0,T; H^{1+s}(\R^2))}^{1-\alpha}\\
&\leq C,
\end{align*}
for some $\alpha\in [0,1]$.
From \eqref{vac.a} we conclude
\begin{align*}
\|\urho\|_{L^{\beta+1}(\QT)}^{\beta+1}\leq C_1\|\urho\|_{L^{\beta+1}(\QT)} + C_2
\end{align*}
which implies, via Young's inequality,
\begin{align}
\label{vac.b}
\|\urho\|_{L^{\beta+1}(\QT)}\leq C.
\end{align}
Next we find a suitable bound for $\urho\na\prho$. Since $\urho$ and $\na\prho$
are bounded in $L^\infty(0,T; L^\beta(\R^2))$ and $L^\infty(0,T; L^2(\R^2))$,
respectively, then $\urho\na\prho$ is bounded in $L^\infty(0,T; L^{2\beta/(2+\beta)}(\R^2))$.
On the other hand, $\urho$ and $\na\prho$
are also bounded in $L^{\beta+1}(0,T; L^{\beta+1}(\R^2))$ 
and $L^2(0,T; L^{2/(1-s)}(\R^2))$,
respectively, so $\urho\na\prho$ is also bounded in $L^{2(\beta+1)/(\beta+3)}(0,T; L^{2(\beta+1)/(2+(1-s)(\beta+1)}(\R^2))$. A straightforward interpolations leads to
\begin{align}
\label{vac.c}
\|\urho\na\prho\|_{L^{\frac{2(1+\beta)((1+s)\beta + 2)}{(\beta+2)(\beta+3)}}(\QT)}\leq C.
\end{align}
Now we prove the strong convergence of $\prho$.
From \eqref{apr11.ei}, \eqref{vac.b} it follows that
\begin{align}
	\label{vac.pat}
\|\pa_t\prho\|_{L^{(\beta+1)/\beta}(0,T; (L^2\cap L^{\beta+1}(\R^2))'}\leq C
\end{align}
From \eqref{apr11.ei} and \eqref{vac.pat} we deduce via Aubin-Lions Lemma and a Cantor diagonal argument that, up to subsequences,
$$
\prho\to p\quad\mbox{strongly in }L^1(B_R\times (0,T)),\quad\forall R>0.
$$
Bound \eqref{vac.b} implies that, up to subsequences, 
\begin{align}\label{lim.u}
\urho &\rightharpoonup u\quad\mbox{weakly in }L^{\beta+1}(\QT),\\
\nonumber
(\urho)^\beta &\rightharpoonup v\quad\mbox{weakly in }L^{\frac{\beta+1}{\beta}}(\QT),
\end{align}
for some function $v\in L^{\frac{\beta+1}{\beta}}(\QT)$. We are now going to show that 
$v=u^\beta$ a.e.~in $\QT$.

Let us now consider the vector fields
\begin{align*}
U^{(\rho)} &\equiv (\urho, -\urho\nabla\prho),\quad
V^{(\rho)} \equiv (\pa_t\prho, \na\prho) .
\end{align*}
Let $\pi_0 = \frac{1+\beta}{\beta}$, 
$\pi_1=\pi_2=\left(\frac{2(1+\beta)((1+s)\beta + 2)}{(\beta+2)(\beta+3)}\right)'
= \left( 1 - \frac{(\beta+2)(\beta+3)}{2(1+\beta)((1+s)\beta + 2)} \right)^{-1} $.
It is easy to see that $\pi_{min}\equiv\min\{\pi_0,\pi_1,\pi_2\} = \pi_0$.
Let $\Omega\subset\R^2$ bounded open smooth domain.

Bound \eqref{vac.c} means that $\urho\pa_{x_i}\prho$ is bounded in $L^{\pi_i'}(\OmT)$,
for $i=1,2$, while $\urho$ is bounded in $L^{\pi_0'}(\OmT)$ thanks to \eqref{vac.b}.
In particular $U_i^{(\rho)}$ is bounded in $L^{\pi_i'}(\OmT)$ for $i=0,1,2$.

On the other hand, \eqref{apr11.ei} and \eqref{vac.b} imply that $\pa_t\prho$
is bounded in $L^{\pi_0}(\OmT)$, while $\pa_{x_i}\prho$ is bounded in $L^{\pi_i}(\OmT)$ for $i=1,2$ thanks to \eqref{vac.aa} and the trivial relation
$\pi_1=\pi_2\leq \frac{2(\beta+1)}{\beta}$ which holds thanks to the hypothesis
$s\geq \frac{1}{\beta}$. It follows that $V^{(\rho)}_i$ is bounded in 
$L^{\pi_i}(\OmT)$ for $i=0,1,2$.

Next we notice that
$$
\Div_{(t,x)}U^{(\rho)} = \Div_x\left( \rho |\nabla u^{\beta-1}|^{1/\beta-1}\nabla u^{\beta-1} \right)\to 0\quad\mbox{strongly in }W^{-1,\pi_{min}'}(\OmT)
$$
thanks to \eqref{apr11.ei}. On the other hand $\Curl V^{(\rho)}\equiv 0$
since $V^{(\rho)}$ is a gradient field. 

Therefore we are able to apply \cite[Thr.~1.1]{GasMar08} and deduce that
$$
U^{(\rho)}\cdot V^{(\rho)}\rightharpoonup U\cdot V\quad\mbox{in }\mathcal{D}'(\OmT),
$$
where $U$, $V$ are the weak limits of $U^{(\rho)}$, $V^{(\rho)}$, respectively.
This implies, being $U^{(\rho)}\cdot V^{(\rho)}$ bounded in $L^1(\OmT)$,
\begin{align}\label{comm.1}
\urho\pa_t\prho - \urho |\na\prho|^2 \rightharpoonup 
u\pa_t p - \overline{\urho\na\prho}\cdot\na p \quad\mbox{in }\mathcal{D'}(\OmT),
\end{align}
where $u$, $\overline{\urho\na\prho}$ are the weak limits of $\urho$, $\urho\na\prho$, respectively. However,
we know that $\prho\to p$ strongly in $L^1(\OmT)$,
while $\na\prho$ is bounded in $L^{2(\beta+1)/\beta+\epsilon}(\R^2\times (0,T))$
and $L^2(0,T; H^s(\R^2))$ thanks to \eqref{apr11.ei}, \eqref{vac.aa}. Therefore Gagliardo-Nirenberg inequality allows us to deduce $\na\prho\to\na p$ strongly in 
$L^{2(\beta+1)/\beta}(\OmT)$. It follows
\begin{equation}\label{lim.unap}
\urho |\na\prho|^2\rightharpoonup u |\na p|^2,\quad
\urho\na\prho\rightharpoonup u\na p .
\end{equation}
From the relations above and \eqref{comm.1} we deduce
\begin{align}\label{comm.2}
	\urho\pa_t\prho \rightharpoonup 
	u\pa_t p \quad\mbox{in }\mathcal{D'}(\OmT).
\end{align}
Again, the local-in-space strong convergence of $\prho$ and the known uniform bounds for $\prho$ in $L^\infty(0,T; L^1(\R^2))$ and $L^2(0,T; H^{1+s}(\R^2))$ imply via Gagliardo-Nirenberg inequality that $(-\Delta)^s\prho\to (-\Delta)^s p$ strongly in $L^2(\OmT)$. This fact, together with the weak convergence $\urho \rightharpoonup u$ in $L^{\beta+1}(\QT)$ and relation $\beta\geq 2$, implies that
\begin{align}
	\label{comm.3}
	\urho (-\Delta)^s\prho\rightharpoonup u (-\Delta)^s p\quad\mbox{in }\mathcal{D'}(\OmT).
\end{align}
Summing \eqref{comm.2}, \eqref{comm.3}, employing \eqref{apr11.eq.2} and the uniform bound for $\urho$ in $L^{\beta+1}(\R^2\times (0,T))$ leads to
\begin{align}
	\label{comm.4}
	(\urho)^{\beta+1}\rightharpoonup u v\quad\mbox{in }\mathcal{M}(\OmT),
\end{align}
where $v$ is the weak limit of $(\urho)^\beta$ and $\mathcal{M}(\OmT)$ is the space of Radon measures, i.e.~the dual of $C^0_c(\OmT)$.

We are going to show that \eqref{comm.4} implies the a.e.~convergence of $\urho$ in $\QT$. Define the truncation operator $T_k$ as $T_k(x)\equiv\min\{x,k\}$ for every $x\geq 0$, $k\in\N$. 
Let $\phi\in C^0_c(\QT)$, $\phi\geq 0$ in $\QT$ arbitrary. 
Relation \eqref{comm.4} implies 
$$
\II{\urho T_k(\urho)^{\beta}\phi}\leq \II{(\urho)^{\beta+1}\phi}
\to\II{u v\phi}\quad\mbox{as }\rho\to 0,
$$
and so
\begin{equation}\label{bug.1}
\II{\overline{\urho T_k(\urho)^{\beta}}\phi}\leq \II{u v\phi} .
\end{equation}
On the other hand \cite[Thr.~10.19]{FeiNov} implies
\begin{equation}\label{bug.2}
\II{\overline{\urho T_k(\urho)^{\beta}}\phi}\geq 
\II{u\overline{T_k(\urho)^{\beta}}\phi}.
\end{equation}
The weak lower semicontinuity of the $L^1$ norm yields
\begin{align*}
\II{|\overline{T_k(\urho)^{\beta}} - v|} &\leq
\liminf_{\rho\to 0}\II{|T_k(\urho)^{\beta} - (\urho)^\beta|}\\
&\leq 2\liminf_{\rho\to 0}\int_{\{\urho>k\}}(\urho)^\beta dx dt\\
&\leq \frac{2}{k}\liminf_{\rho\to 0}\int_{\{\urho>k\}}(\urho)^{\beta+1} dx dt .
\end{align*}
The uniform bound for $\urho$ in $L^{\beta+1}(\R^2\times (0,T))$
implies
$$ \lim_{k\to\infty}\II{|\overline{T_k(\urho)^{\beta}} - v|} = 0, $$
which implies $\overline{T_k(\urho)^{\beta}}\rightharpoonup v$ weakly in $L^{\frac{\beta+1}{\beta}}(\QT)$ as $k\to\infty$, and so
\begin{align}\label{bug.3}
\II{u\overline{T_k(\urho)^{\beta}}\phi}\to \II{u v\phi}\quad\mbox{as }k\to\infty .
\end{align}
From \eqref{bug.1}--\eqref{bug.3} we deduce
\begin{equation*}
\lim_{k\to\infty}\II{(\overline{\urho T_k(\urho)^\beta}-u\overline{T_k(\urho)^\beta})\phi} = 0,
\end{equation*}
which easily implies
\begin{align}\label{bug.4}
\lim_{k\to\infty}
\lim_{\rho,\rho'\to 0}\II{(\urho - \urhop)(T_k(\urho)^\beta - T_k(\urhop)^\beta)\phi} = 0.
\end{align}
However, elementary computations yield
$$ 0\leq
(x-y)(T_k(x)^{\beta-1}-T_k(y)^{\beta-1})\leq 
(x-y)(T_{k+1}(x)^{\beta-1}-T_{k+1}(y)^{\beta-1})\quad\mbox{for }x, y\geq 0,~~ k\in\N ,
$$
which implies that the sequence
$$
a_k\equiv
\lim_{\rho,\rho'\to 0}\II{(\urho - \urhop)(T_k(\urho)^\beta - T_k(\urhop)^\beta)\phi}
$$
is nondecreasing and nonnegative. Moreover $\lim_{k\to\infty}a_k=0$ thanks to \eqref{bug.4}. Therefore $a_k=0$ for every $k\in\N$, that is
\begin{align*}
&\lim_{\rho,\rho'\to 0}\II{(\urho - \urhop)(T_k(\urho)^\beta - T_k(\urhop)^\beta)\phi} = 0, \qquad k\in\N.
\end{align*}
In particular
\begin{align}\label{bug.6}
&\lim_{\rho,\rho'\to 0}\iint_{\{M_{\rho,\rho'}\leq k\}}(\urho - \urhop)((\urho)^\beta - (\urhop)^\beta)\phi\, dx dt = 0, \qquad k\in\N,
\end{align}
where we defined $M_{\rho,\rho'}\equiv \max(\urho,\urhop)$.

It is easy to prove the elementary relation
$$
\frac{x^\beta - y^\beta}{x-y}\geq \max(x,y)^{\beta-1},\quad x,y\geq 0,~~ x\neq y ,
$$
which, together with \eqref{bug.6}, leads to
\begin{equation}\label{bug.7}
\lim_{\rho,\rho'\to 0}\iint_{\{M_{\rho,\rho'}\leq k\}}(\urho - \urhop)^2 M_{\rho,\rho'}^{\beta-1} \phi\, dx dt = 0, \qquad k\in\N .
\end{equation}
Fix $\epsilon\in (0,1)$ arbitrary. Let us consider
\begin{align*}
&\II{(\urho - \urhop)^2\phi} = 
\iint_{\{M_{\rho,\rho'}>k\}}(\urho - \urhop)^2\phi\, dx dt\\
&\quad + \iint_{\{M_{\rho,\rho'}\leq \epsilon\}}(\urho - \urhop)^2\phi\, dx dt +
\iint_{\{\epsilon< M_{\rho,\rho'}\leq k\}}(\urho - \urhop)^2\phi\, dx dt\\
&\leq 4\iint_{\{\urho >k\}} (\urho)^2\phi dx dt +2\epsilon^2\II{\phi}\\
&\quad  + 
\frac{1}{\epsilon^{\beta-1}}\iint_{\{\epsilon< M_{\rho,\rho'}\leq k\}} (\urho - \urhop)^2 M_{\rho,\rho'}^{\beta-1}\phi dx dt \\
&\leq 4 k^{1-\beta}\iint_{\{\urho >k\}} (\urho)^{\beta+1}\phi dx dt +2\epsilon^2\II{\phi}\\
&\quad  + 
\frac{1}{\epsilon^{\beta-1}}\iint_{\{\epsilon< M_{\rho,\rho'}\leq k\}} (\urho - \urhop)^2 M_{\rho,\rho'}^{\beta-1}\phi dx dt .
\end{align*}
From \eqref{bug.7} and the uniform bound for $\urho$ in $L^{\beta+1}(\QT)$ we deduce
\begin{align*}
\lim_{\rho,\rho'\to 0}
\II{(\urho - \urhop)^2\phi}\leq C(\epsilon^2 + k^{1-\beta}) .
\end{align*}
Since the left-hand side of the above inequality does not depend on $\epsilon$, $k$, we conclude
\begin{equation}\label{bug.6c}
\lim_{\rho,\rho'\to 0}\II{(\urho - \urhop)^2\phi} = 0.
\end{equation}
By choosing $\phi\in C^0_c(\QT)$, $\phi\geq 0$ such that $\phi\equiv 1$ on $Q_R\equiv B_R\times (R^{-1},T-R^{-1})$ for $R>2/T$ arbitrary (where $B_R$ is the ball of $\R^2$ with center 0 and radius $R$) we conclude from \eqref{bug.6c} that $\urho$ is a Cauchy sequence in $L^2(Q_R)$ (and therefore strongly convergent in such space) for every $R>2/T$. In particular, for every $R>0$ there exists a subsequence $u^{(\rho,R)}$ of $\urho$ that is a.e.~convergent in $Q_R$. A Cantor diagonal argument yields the existence of a subsequence (not relabeled) of $\urho$ that is a.e.~convergent in $Q_R$ for every $R\in\N$, and therefore $\urho\to u$ a.e.~in $\QT$. 

The a.e.~convergence of $\urho$ and the boundedness of $\urho$ in $L^{\beta+1}(\QT)$ imply
\begin{align}
\label{lim.ubeta}
(\urho)^\beta\rightharpoonup u^\beta\quad\mbox{weakly in }L^{\frac{\beta+1}{\beta}}(\QT).
\end{align}
Finally, since $\urho$ is bounded in $L^{\beta+1}(\QT)$ (see \eqref{vac.b}),
while $\na\prho$, $\rho^{\beta/(\beta+1)}\na(\urho)^{\beta-1}$ are bounded in
$L^\infty(0,T; L^2(\R^2))$, $L^{(\beta+1)/\beta}(\QT)$ (from \eqref{apr11.ei}), 
we deduce
\begin{align*}
&\left|\int_0^T\langle\pa_t\urho , \phi\rangle dt\right|\\ 
&\leq
\int_0^T\int_{\R^2}\urho |\na\prho| |\na\phi|dx dt + 
\rho\int_0^T\int_{\R^2}|\na(\urho)^{\beta-1}|^{1/\beta}|\na\phi| dx dt\\
&\leq \|\urho\|_{L^{\beta+1}(\QT)}
\|\na\prho\|_{L^\infty(0,T; L^2(\R^2))} 
\|\na\phi\|_{L^{(\beta+1)/\beta}(0,T; L^{2(\beta+1)/(\beta-1)}(\R^2))}\\
&\qquad + \rho\|\na(\urho)^{\beta-1}\|^{1/\beta}_{L^{(\beta+1)/\beta}(\QT)}
\|\na\phi\|_{L^{(\beta+1)/\beta}(\QT)}\\
&\leq C\|\phi\|_{L^{(\beta+1)/\beta}(0,T; W^{1,(\beta+1)/\beta}\cap W^{1,2(\beta+1)/(\beta-1)}(\R^2))}.
\end{align*}
As a consequence
\begin{align*}
\|\pa_t\urho\|_{L^{\beta+1}(0,T; (W^{1,(\beta+1)/\beta}\cap W^{1,2(\beta+1)/(\beta-1)}(\R^2))')}\leq C,
\end{align*}
and so
\begin{align}\label{lim.ut}
\pa_t\urho\rightharpoonup \pa_t u \quad\mbox{weakly in }L^{\beta+1}(0,T; (W^{1,(\beta+1)/\beta}\cap W^{1,2(\beta+1)/(\beta-1)}(\R^2))').
\end{align}
Putting the previous limit relations together allow us to take the limit $\rho\to 0$ inside \eqref{apr11.eq.1}, \eqref{apr11.eq.2} and obtain a solution to \eqref{weak.u}, \eqref{weak.p} (after a suitable density argument).
Finally, we show the mass conservation property. Define the cutoff
\begin{align*}
\eta_R(x) = 
\begin{cases}
1 & |x|<R\\
0 & |x|>2R\\
\frac{1}{2}(\cos\pi(|x|/R -1)+1) & R\leq |x|\leq 2R
\end{cases},\quad R>0.
\end{align*}
Let $\psi\in C^1_c[0,T)$ arbitrary. Choosing $\phi(x,t)=\psi(t)\eta_R(x)$ inside \eqref{weak.u} yields
\begin{align*}
&\left|\int_0^T\int_{\R^2}u\eta_R(x)\psi'(t) dxdt + \psi(0)\int_{\R^2}u_{in}\eta_R dx\right| \\
&= 
\left|\int_{\R^2}u\na p\cdot\na\eta_R\psi dxdt\right|\\
&\leq
\|u\|_{L^{\beta+1}(\QT)}\|\na p\|_{L^\infty(0,T; L^2(\R^2))}
\|\na\eta_R\|_{L^{\frac{2(\beta+1)}{\beta-1}}(\R^2)}
\|\psi\|_{L^{\frac{\beta+1}{\beta}}(0,T)}.
\end{align*}
Since $u\in L^{\beta+1}(\QT)$ and $\na p\in L^\infty(0,T; L^2(\R^2))$,
it follows
\begin{align}\label{uffa}
&\left|\int_0^T\int_{\R^2}u\eta_R(x)\psi'(t) dxdt + \psi(0)\int_{\R^2}u_{in}\eta_R dx\right| \\
\nonumber
&\qquad\leq C\|\na\eta_R\|_{L^{2+\delta}(\R^2)}\|\psi\|_{
	L^{\frac{\beta+1}{\beta}}(0,T)}\leq
C R^{-\frac{\delta}{2+\delta}}\|\psi\|_{
	L^{\frac{\beta+1}{\beta}}(0,T)}
\end{align}
with $\delta=\frac{4}{\beta-1}>0$. Choosing $\psi'\leq 0$ in $[0,T]$, taking the limit $R\to\infty$ inside \eqref{uffa} and applying the monotone convergence theorem yields 
$u\in L^\infty_{loc}(0,\infty ; L^1(\R^2))$ (since $T>0$ is arbitrary). At this point we can apply the dominated convergence theorem to take the limit $R\to\infty$ inside \eqref{uffa} with $\psi\in C^1_c([0,T))$ arbitrary and deduce
$$
-\int_0^T\int_{\R^2}u dx \,\psi'(t) dt = \psi(0)\int_{\R^2}u_{in} dx , \quad t>0,
$$
implying that the mass $\int_{\R^2}u dx$ is constant in time.
This concludes the proof of Theorem \ref{main_thm}.

\end{document}